\numberwithin{equation}{section}
\numberwithin{figure}{section}
\theoremstyle{plain}
\newtheorem{thm}{\protect\theoremname}
  \theoremstyle{plain}
  \newtheorem{prop}[thm]{\protect\propositionname}
  \providecommand{\propositionname}{Proposition}
\providecommand{\theoremname}{Theorem}
\begin{document}

\title[Unbounded and blow-up solutions for a delay logistic equation]{Unbounded and blow-up solutions for a delay logistic equation with positive feedback}

\author{Istv\'an Gy\H{o}ri} \address[Istv\'an Gy\H{o}ri]{University of Pannonia, Veszprem, Hungary}
\author{ Yukihiko Nakata} \address[Yukihiko Nakata]{Shimane University, Japan}
\author{ Gergely R\"ost} \address[Gergely R\"ost]{University of Szeged, Hungary and University
of Oxford, UK} \email{rost@math.u-szeged.hu}

\begin{abstract}
We study bounded, unbounded and blow-up solutions of a delay logistic equation without assuming the dominance of the instantaneous feedback. It is shown that there can exist an exponential (thus unbounded) solution for the nonlinear
problem, and in this case the positive equilibrium is always unstable. We obtain
a necessary and sufficient condition for the existence of blow-up solutions, and characterize a wide class of such solutions. There is a parameter set such
that the non-trivial equilibrium is locally stable but not globally stable due to
the co-existence with blow-up solutions. 
\end{abstract}

\maketitle

\section{Introduction}

There is a vast literature on the study of delay logistic equations describing
population growth of a single species \cite{Gopalsamy:1992,Kuang:1993}.
In \cite{Ruan:2006}, the qualitative studies of such logistic type delay differential
equations have been summarized, see also \cite{Appleby:2011,Faria:2003,Gyori:2000,Lenhart:1986,Teng:2002}
and references therein. 

In \cite{Lenhart:1986} the authors study the global asymptotic stability
of a logistic equation with multiple delays. Their global stability
result is generalized in Theorem 5.6 in Chapter 2 in \cite{Kuang:1993},
see also the discussion in \cite{Gyori:2000}. Those conditions presented
in \cite{Lenhart:1986} and in Theorem 5.6 in Chapter 2 in \cite{Kuang:1993}
are delay independent conditions, exploiting the dominance of the instantaneous
feedback. In \cite{Gyori:2000}, applying the oscillation theory of
delay differential equations \cite{GyoriLadas1991}, the first author
of this paper obtains a global stability condition for a logistic
equation without assuming the dominance of the instantaneous feedback.
See also \cite{GyoriLadas1991} and references therein for the study
of logistic equations without instantaneous feedback. 

When the instantaneous feedback term is small compared to the delayed
feedback, the positive equilibrium is not always globally stable.
Our motivation of this note comes from interesting examples shown
in \cite{Gyori:2000}, where the author shows the existence of an exponential
solution for a logistic equation with delay. Here we wish to investigate the properties of positive solutions of such an equation in detail. To be more specific, we
consider the logistic equation 
\begin{equation}
\frac{d}{dt}x(t)=r x(t)\left(1+\alpha x(t)-x(t-1)\right),\label{eq:Logistic}
\end{equation}
where $r>0$, $\alpha\in\mathbb{R}$. The equation (\ref{eq:Logistic})
is a special case of the equations studied in \cite{Gyori:2000}.
Note that (\ref{eq:Logistic}) is a normalized form of the following
delay differential equation 
\begin{equation}
\frac{d}{ds}N(s)=N(s)\left(\tilde  r+aN(s)-bN(s-\tau)\right),\label{eq:LogisticN}
\end{equation}
where $r>0,\ a\in\mathbb{R}$ and $b>0$. If we define $y(s):=\frac{b}{\tilde r}N(s)$ and $\alpha:=\frac{a}{b}$,
then we obtain
$$\frac{d}{ds} y(s)=\tilde r y(s) \left(1+\alpha y(s)-y(s-\tau)\right).$$
Next we scale the time so that the delay is normalized to be one, by letting
$s:=t \tau$ and $x(t):=y(s)$, then $y(s-\tau)=y(\tau(t-1))=x(t-1)$
and by calculating $\frac{d}{dt} x(t)$, 
we obtain
(\ref{eq:Logistic}) with $r=\tau \tilde r$.

For (\ref{eq:Logistic}) we show that there exist some unbounded solutions,
when $\alpha$ is allowed to be positive. More precisely, it is shown
that a blow-up solution (i.e. a solution that converges to infinity in finite time) exists if and only if $\alpha>0$ holds. We
then show that an exponential solution, namely $x(t)=ce^{rt},\ c>0$,
exists when $\alpha=e^{-r}$ holds, which is an unbounded but not blow-up solution.
The case is further elaborated, as we also find solutions which blows up faster
than the exponential solutions.

This paper is organized as follows. In Section 2 we collect previous
results on boundedness and stability, which are known in the literature, with the exception about the existence of blow-up solutions. In Section 3, we focus
on the exponential solution for the nonlinear differential equation
(\ref{eq:Logistic})  and its relation to stability. In Section 4 we characterize a large class of initial functions that generate superexponential blow-up solutions, and we also find a class of subexponential solutions. Section 5 is devoted to a summary and discussions.

\section{Boundedness, stability and blow-up}

Denote by $C$ the Banach space $C([-1,0],\mathbb{R})$ of continuous
functions mapping the interval $[-1,0]$ into $\mathbb{R}$ and designate
the norm of an element $\phi\in C$ by $\|\phi\|=\sup_{-1\leq\theta\leq0}|\phi(\theta)|$.
The initial condition for (\ref{eq:Logistic}) is a positive continuous
function given as 
\[
x(\theta)=\phi(\theta),\ \theta\in\left[-1,0\right].
\]

There exists a unique positive equilibrium given by
\[
x^{*}=\frac{1}{1-\alpha}
\]
if and only if $\alpha<1$ holds. In the following theorem we characterize
global and local dynamics of the solutions. The result on the existence of
a blow-up solution seems to be new. 
\begin{thm}
The following statements are true.

\begin{enumerate}
\item If 
\begin{equation}
\alpha\leq-1,\label{eq:Abs_cond}
\end{equation}
then the positive equilibrium is globally asymptotically stable.
\item If $-1<\alpha<1$, then the positive equilibrium is locally asymptotically
stable for 
\begin{equation}
r<\sqrt{\frac{1-\alpha}{1+\alpha}}\arccos(\alpha),\label{eq:delay_dep}
\end{equation}
and it is unstable for 
\begin{equation}
r>\sqrt{\frac{1-\alpha}{1+\alpha}}\arccos(\alpha).\label{eq:delay_dep2}
\end{equation}
Moreover, 

\begin{enumerate}
\item If $-1<\alpha\leq0$, then every solution is bounded.
\item If $0<\alpha$, then there exists a blow-up solution in a finite
time.
\end{enumerate}
\item If $\alpha\geq1$, then for every solution, which exists globally,
one has $\limsup_{t\to\infty}x(t)=\infty$.
\end{enumerate}
\end{thm}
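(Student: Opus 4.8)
The plan is to establish the three items separately; items (1), the stability dichotomy in (2), and (2a) are essentially classical, while (2b) carries the new content. For (1) I would set $x(t)=x^{*}(1+z(t))$, which turns \eqref{eq:Logistic} into $z'(t)=\beta(1+z(t))\bigl(\alpha z(t)-z(t-1)\bigr)$ with $\beta:=rx^{*}=r/(1-\alpha)>0$ and $z(t)>-1$; since $\alpha\le-1$ the instantaneous coefficient dominates the delayed one in modulus, so global attractivity of $z\equiv0$ follows from the Lyapunov-functional argument of Lenhart and Travis \cite{Lenhart:1986} (equivalently Theorem 5.6 of Chapter 2 in \cite{Kuang:1993}) after this normalisation, while local asymptotic stability is contained in the characteristic analysis for (2). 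For (2), linearising at $x^{*}$ gives $w'(t)=\beta\bigl(\alpha w(t)-w(t-1)\bigr)$ with characteristic equation $\lambda=\beta(\alpha-e^{-\lambda})$; one checks that for $r\to0^{+}$ every root lies in the open left half-plane (the dominant real root being $\approx\beta(\alpha-1)<0$), and that as $r$ grows a root can reach the imaginary axis only when $\cos\omega=\alpha$ and $\omega=\beta\sin\omega$ for some $\omega\in(0,\pi)$; solving these and substituting $\beta=r/(1-\alpha)$ gives the first such value $r=\sqrt{(1-\alpha)/(1+\alpha)}\,\arccos\alpha$, and transversality together with the principle of linearised stability yields local asymptotic stability below it and instability above it. For (2a) with $-1<\alpha<0$, note that $1+\alpha x(t)-x(t-1)<1+\alpha x(t)\le0$ as soon as $x(t)\ge1/|\alpha|$, so $x$ is strictly decreasing there and can never cross the level $1/|\alpha|$ upwards; hence $x(t)\le\max\{\|\phi\|,1/|\alpha|\}$ for all $t\ge0$, and the borderline case $\alpha=0$ is Hutchinson's equation, whose positive solutions are known to be bounded \cite{Kuang:1993}.

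For (3), suppose $x$ is a globally defined solution with $L:=\limsup_{t\to\infty}x(t)<\infty$, and fix $T$ with $x(t)<L+1$ for all $t\ge T-1$. Integrating $\tfrac{d}{dt}\ln x(t)=r\bigl(1+\alpha x(t)-x(t-1)\bigr)$ over $[T,T+n]$ and using $\alpha\ge1$, $x>0$, and the identities $\int_{T}^{T+n}x(s)\,ds=\int_{T}^{T+n-1}x(s)\,ds+\int_{T+n-1}^{T+n}x(s)\,ds$ and $\int_{T-1}^{T+n-1}x(s)\,ds=\int_{T-1}^{T}x(s)\,ds+\int_{T}^{T+n-1}x(s)\,ds$, one obtains $\ln x(T+n)-\ln x(T)\ge r\bigl(n-\int_{T-1}^{T}x(s)\,ds\bigr)\ge r(n-L-1)\to\infty$, contradicting $x(T+n)<L+1$. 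Hence $\limsup_{t\to\infty}x(t)=\infty$.

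The heart of the theorem is (2b), where the plan is to choose the initial function so that on part of $[0,1]$ the delayed term is frozen and \eqref{eq:Logistic} collapses to an explicitly solvable Riccati equation that already blows up. Concretely, fix $\varepsilon\in(0,1)$ and $K>2/(r\alpha)$, and let $\phi\in C$ be positive and continuous with $\phi\equiv\varepsilon$ on $[-1,-\tfrac12]$ and $\phi(0)=K$ (e.g.\ affine on $[-\tfrac12,0]$). Then for $t\in[0,\tfrac12]$ we have $t-1\in[-1,-\tfrac12]$, so $x(t-1)=\phi(t-1)=\varepsilon$, and on that interval the solution of \eqref{eq:Logistic} solves the initial value problem $x'(t)=rx(t)\bigl(\alpha x(t)+(1-\varepsilon)\bigr)$, $x(0)=K$. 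Since $1-\varepsilon>0$ we get $x'(t)\ge r\alpha x(t)^{2}$, hence $-\tfrac{d}{dt}\tfrac{1}{x(t)}\ge r\alpha$ and $\tfrac{1}{x(t)}\le\tfrac{1}{K}-r\alpha t$, so $x$ must blow up at some $T_{0}\le1/(r\alpha K)<\tfrac12$. Thus $x$ is a solution of \eqref{eq:Logistic} tending to $+\infty$ in finite time, which proves (2b); since the construction needs only $\alpha>0$ (and $r>0$), together with (1) and (2a) it also shows that a blow-up solution exists precisely when $\alpha>0$.

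The only real obstacle is finding this construction. A naive attempt with a constant $\phi\equiv c$ fails: a large $c$ makes $1+\alpha x-x(t-1)$ negative, so $x$ decreases, when $\alpha<1$, while a small $c$ keeps $x$ too small for the term $r\alpha x^{2}$ to be effective. The resolution is to keep $\phi$ small on most of $[-1,0]$ so that the delayed feedback stays negligible over an initial window, while letting $\phi(0)=K$ be as large as needed; the scaling $T_{0}=O(1/(r\alpha K))\to0$ then forces the blow-up to occur before that window closes, uniformly in $r$ and $\alpha$. The remaining points — well-posedness of the solution on $[-1,T_{0})$, the elementary comparison estimate above, and the transversality and linearisation details in (2) — are routine.
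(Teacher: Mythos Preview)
Your proposal is correct and follows essentially the same route as the paper: parts (1), (2), and (2a) are handled by the same references and comparison arguments, and your blow-up construction in (2b) is the paper's construction up to a cosmetic change---the paper takes $\phi\equiv 1$ on $[-1,-\tfrac12]$ so that $1-x(t-1)=0$ exactly and one solves $x'=r\alpha x^{2}$ as an equality rather than your inequality with $\phi\equiv\varepsilon<1$. The only substantive difference is that for (3) the paper simply cites Theorem~5.1 of \cite{Gyori:2000}, whereas you supply a short self-contained integration argument; your argument is correct and is a nice addition.
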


\begin{proof}
1) For the global stability of the equilibrium we refer to the proof
of Theorem 5.6 in Chapter 2 in \cite{Kuang:1993}. 

2) The result for local asymptotic stability is well-known, see for
example Theorems 2 and 3 in \cite{Ruan:2006}.

2-a) Notice that a solution of \eqref{eq:Logistic} satisfies $x(t)=x(0)e^{r\int_0^t 1+\alpha x(s)-x(s-1) ds},$ hence positive solutions remain positive. When $-1<\alpha<0$, the result follows from a simple comparison
principle applied for the inequality $x'(t)\leq x(t)r(1-\alpha x(t))$.
In the case $\alpha=0$ we obtain the Wright's equation for which
boundedness is known, see \cite{Liz:2008}.

2-b) Let us assume that $\alpha >0$. We show that there exists
a solution that blows up in a finite time. Consider a positive continuous initial
function satisfying 
\[
\phi(\theta)=\begin{cases}
1, & \theta\in\left[-1,-\frac{1}{2}\right],\\
q, & \theta=0,
\end{cases}
\]
where $q$ is a positive constant to be determined later. Since one has
\[
1-x(t-1)=0,\ 0\leq t\le\frac{1}{2},
\]
it holds 
\begin{equation}
x^{\prime}(t)=r\alpha x(t)^{2},\ 0\leq t\leq\frac{1}{2}.\label{eq:ode_bu}
\end{equation}
Then equation (\ref{eq:ode_bu}) is easily integrated as 
\[
x(t)=\frac{1}{\frac{1}{q}-r\alpha t},
\]
for $t<\frac{1}{qr\alpha}$ where the solution exists. Let us set 
\[
q=\frac{h}{r\alpha},\ h\geq2,
\]
then we have 
\[
\lim_{t\uparrow\frac{1}{h}}x(t)=\lim_{t\uparrow\frac{1}{h}}\frac{1}{r\alpha(\frac{1}{h}-t)}=\infty,
\]
so the solution blows up at $t=\frac{1}{h}<\frac{1}{2}$.

3) The result follows from Theorem 5.1 in \cite{Gyori:2000}.

\end{proof}
\begin{figure}
\includegraphics[scale=0.85]{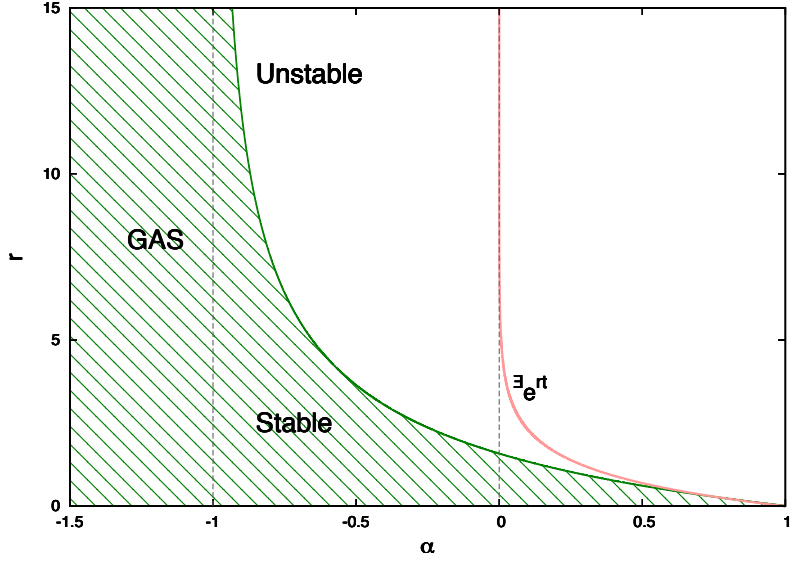}\caption{\label{fig:Stabr}Stability region for the positive equilibrium in
the $(\alpha,r)$-parameter plane. The shaded region is the stability
region given by (\ref{eq:Abs_cond}) and (\ref{eq:delay_dep}).
The positive equilibrium is globally stable for $\alpha\le-1$ and
is unstable above the stability boundary. Exponential solutions exist on the denoted curve. Blow-up solutions exist for $\alpha>0$, hence we can observe a region where the positive equilibrium is locally stable yet blow-up solutions also exist.}
\end{figure}

\section{Instability and exponential solutions}

A remarkable feature of the logistic equation with positive feedback
(\ref{eq:Logistic}) is the possible existence of exponential solutions, despite the equation being nonlinear. As in \cite{Gyori:2000}, we find the following.
\begin{prop}
\label{prop:exp}There exists an exponential solution 
\begin{equation}
x_{c}(t):=ce^{rt},\ t\geq-1\label{eq:exp}
\end{equation}
of the generalized logistic equation 
\begin{equation}
\frac{d}{dt}x(t)=r x(t)\left(1+\sum_{i=1}^{k} a_i x(t-\tau_i)\right),\label{eq:genlog}
\end{equation}
if and only if
\begin{equation}
\sum_{i=1}^{k} a_i e^{-r \tau_i}=0. \label{cond:genlog}
\end{equation}
\end{prop}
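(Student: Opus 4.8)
The plan is to prove the equivalence by direct substitution of the explicit ansatz, so that no fixed-point or continuation argument is required. First I would plug $x_c(t)=ce^{rt}$ into the left-hand side of \eqref{eq:genlog}, obtaining $\frac{d}{dt}x_c(t)=rce^{rt}$. Then I would evaluate the delayed terms: since $x_c(t-\tau_i)=ce^{r(t-\tau_i)}=ce^{rt}e^{-r\tau_i}$, the feedback bracket becomes $1+\sum_{i=1}^k a_i ce^{rt}e^{-r\tau_i}=1+ce^{rt}\sum_{i=1}^k a_i e^{-r\tau_i}$, and hence the right-hand side of \eqref{eq:genlog} evaluated along $x_c$ equals $rce^{rt}(1+ce^{rt}\sum_{i=1}^k a_i e^{-r\tau_i})$.

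Next I would equate the two sides. Cancelling the factor $rce^{rt}$, which is nonzero since $c>0$ for an exponential solution of the form \eqref{eq:exp}, the identity \eqref{eq:genlog} holds for all $t$ if and only if $ce^{rt}\sum_{i=1}^k a_i e^{-r\tau_i}=0$ for all $t$; as $ce^{rt}>0$, this is equivalent to \eqref{cond:genlog}. Splitting into the two implications makes the logic fully transparent: if \eqref{cond:genlog} holds, the bracket collapses to $1$ and $x_c$ solves \eqref{eq:genlog} for every $c>0$, giving sufficiency; conversely, if some $x_c$ of this form solves the equation, the computation forces $\sum_{i=1}^k a_i e^{-r\tau_i}=0$, giving necessity.

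I do not expect a genuine obstacle in this argument; the only point to state carefully is the nontriviality of $c$, which legitimizes cancelling $rce^{rt}$ and is already built into the notion of the exponential solution \eqref{eq:exp}. As a sanity check I would instantiate \eqref{cond:genlog} for \eqref{eq:Logistic}, which corresponds to $k=2$ with $(a_1,\tau_1)=(\alpha,0)$ and $(a_2,\tau_2)=(-1,1)$: the condition then reads $\alpha-e^{-r}=0$, i.e.\ $\alpha=e^{-r}$, matching the claim made in the introduction and recorded on the curve in Figure \ref{fig:Stabr}.
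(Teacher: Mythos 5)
Your proposal is correct and is exactly the ``straightforward'' substitution argument the paper has in mind (the paper omits the details, merely noting the proof is straightforward and that for \eqref{eq:Logistic} the condition reduces to $\alpha=e^{-r}$, which your sanity check reproduces). No gaps; the cancellation of $rce^{rt}$ is justified as you say since $c>0$.
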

The proof is straightforward, and for \eqref{eq:Logistic} it means that an exponential solution exists if and only if
\begin{equation}
\alpha=e^{-r}\label{eq:ubd}
\end{equation}
holds.

It can be shown that the existence of the exponential solution implies
the instability of the positive equilibrium.
\begin{prop}
Let us assume that (\ref{eq:ubd}) holds. Then the positive equilibrium of (\ref{eq:Logistic})
is unstable.
\end{prop}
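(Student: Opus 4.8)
The plan is to deduce instability from the local stability characterization already recorded in Theorem~1: for $-1<\alpha<1$ the positive equilibrium is unstable whenever \eqref{eq:delay_dep2} holds, i.e. $r>\sqrt{\frac{1-\alpha}{1+\alpha}}\arccos(\alpha)$. Since the hypothesis \eqref{eq:ubd} forces $\alpha=e^{-r}\in(0,1)$, the positive equilibrium $x^{*}=1/(1-\alpha)$ exists and we are in this regime, so it suffices to verify that the curve of exponential solutions lies strictly inside the instability region, that is,
\[
r>\sqrt{\frac{1-e^{-r}}{1+e^{-r}}}\,\arccos\!\left(e^{-r}\right)\qquad\text{for every }r>0 .
\]

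To make this tractable I would substitute $\theta:=\arccos(e^{-r})\in(0,\tfrac{\pi}{2})$, so that $e^{-r}=\cos\theta$, $r=-\ln\cos\theta$, and, by the half-angle identity $\frac{1-\cos\theta}{1+\cos\theta}=\tan^{2}(\theta/2)$, one has $\sqrt{\tfrac{1-e^{-r}}{1+e^{-r}}}=\tan(\theta/2)$. The inequality above then becomes the parameter-free statement
\[
-\ln\cos\theta>\theta\tan(\theta/2),\qquad \theta\in(0,\tfrac{\pi}{2}).
\]

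I would prove this by a monotonicity argument. Let $F(\theta):=-\ln\cos\theta-\theta\tan(\theta/2)$; then $F(0)=0$, and writing $t=\tan(\theta/2)$ one computes $F'(\theta)=\tan\theta-\tan(\theta/2)-\tfrac{\theta}{2}\sec^{2}(\theta/2)$, which after using $\tan\theta=\frac{2t}{1-t^{2}}$ and $\sec^{2}(\theta/2)=1+t^{2}$ collapses to $F'(\theta)=\tfrac12(1+t^{2})(\tan\theta-\theta)$. Since $\tan\theta>\theta$ on $(0,\tfrac{\pi}{2})$, this gives $F'>0$ there, hence $F>0$ on $(0,\tfrac{\pi}{2})$, which is exactly the inequality we need; instability of $x^{*}$ then follows from Theorem~1(2).

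The only genuinely nontrivial point is this last inequality, and a crude estimate will not close it: as $r\to 0^{+}$ both sides behave like $r$ to second order (the gap is only of order $r^{4}$), so one really needs the trigonometric change of variables and the clean factorization of $F'$. It is also worth remarking that the characteristic equation at $x^{*}$ has no real root in this case, so a bare real-root argument cannot establish instability directly — reducing to the already-known stability boundary is precisely what sidesteps that difficulty.
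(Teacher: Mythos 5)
Your proposal is correct and follows essentially the same route as the paper: the same substitution $\theta=\arccos(\alpha)$ (the paper writes $\tan(\theta/2)$ as $\frac{1-\cos\theta}{\sin\theta}$), the same comparison of $-\ln\cos\theta$ with $\theta\tan(\theta/2)$ via vanishing at $0$ and a derivative estimate, and the same final reduction to $\tan\theta>\theta$ on $\left(0,\frac{\pi}{2}\right)$. The only difference is cosmetic — you differentiate the single function $F=g_2-g_1$ and factor $F'$, while the paper differentiates $g_1,g_2$ separately before comparing.
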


\begin{proof}
We compare the two conditions (\ref{eq:ubd}) and (\ref{eq:delay_dep2}).
We set 
\begin{equation}
\omega=\arccos(\alpha)\text{ for }\alpha\in\left(0,1\right).\label{eq:trans_om}
\end{equation}
Note that $\omega\in\left(0,\frac{\pi}{2}\right)$. Using the parameter
transformation (\ref{eq:trans_om}) we get 
\[
\arccos(\alpha)\sqrt{\frac{1-\alpha}{1+\alpha}}=\omega\frac{1-\cos\omega}{\sin\omega}
\]
and the condition (\ref{eq:ubd}) is written as $r=-\ln(\cos\omega)$.
Define
\begin{align*}
g_{1}(\omega) & :=\omega\frac{1-\cos\omega}{\sin\omega},\\
g_{2}(\omega) & :=-\ln(\cos\omega)
\end{align*}
for $\omega\in\left(0,\frac{\pi}{2}\right)$. We claim that 
\begin{equation}
g_{2}(\omega)>g_{1}(\omega),\ \text{\ensuremath{\omega\in\left(0,\frac{\pi}{2}\right)}}.\label{eq:cla}
\end{equation}
It is easy to see that $\lim_{\omega\downarrow0}g_{1}(\omega)=\lim_{\omega\downarrow0}g_{2}(\omega)=0$.
Straightforward calculations show
\begin{align*}
g_{1}^{\prime}(\omega) & =\frac{1-\cos\omega}{\sin\omega}\left(1+\frac{\omega}{\sin\omega}\right),\\
g_{2}^{\prime}(\omega) & =\frac{\sin\omega}{\cos\omega}.
\end{align*}
Then we see 
\begin{align*}
g_{2}^{\prime}(\omega)-g_{1}^{\prime}(\omega) & =\frac{1}{\cos\omega\sin\omega}\left\{ \sin^{2}\omega-\cos\omega\left(1-\cos\omega\right)\left(1+\frac{\omega}{\sin\omega}\right)\right\} \\
 & =\frac{1-\cos\omega}{\cos\omega\sin\omega}\left\{ \left(1+\cos\omega\right)-\cos\omega\left(1+\frac{\omega}{\sin\omega}\right)\right\} \\
 & =\frac{1-\cos\omega}{\cos\omega\sin\omega}\left(1-\omega\frac{\cos\omega}{\sin\omega}\right)\\
 & >0
\end{align*}
for $\omega\in\left(0,\frac{\pi}{2}\right)$. Thus we get (\ref{eq:cla})
and obtain the conclusion.
\end{proof}
In Figure \ref{fig:Stabr} we visualize the condition (\ref{eq:ubd})
in $(\alpha,r)$ parameter plane. Figure \ref{fig:Stabr} clearly
shows that if (\ref{eq:ubd}) holds, then the positive equilibrium
is unstable. 

\section{A new class of blow-up solutions}

We investigate other solutions of (\ref{eq:Logistic}) when (\ref{eq:ubd})
holds. 
\begin{thm}
\label{thm:blowup}Let the condition (\ref{eq:ubd}) hold. Consider
a solution with the initial function satisfying 
\begin{equation}
0<\phi(s)\leq ce^{rs},\ s\in\left[-1,0\right]\label{eq:IC1}
\end{equation}
with 
\begin{align}
\phi(0) & =c,\label{eq:IC2}\\
\phi(-1) & <ce^{-r}\label{eq:IC3}
\end{align}
for $c>0$. Then one has $x(t)>x_{c}(t)=ce^{rt}$ for $t>0$ and the
solution blows up at a finite time. 
\end{thm}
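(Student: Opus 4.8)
The plan is to compare $x(t)$ with the exponential solution $x_c(t)=ce^{rt}$ via the difference $w(t):=x(t)-x_c(t)$, or more conveniently the ratio $u(t):=x(t)/x_c(t)=x(t)e^{-rt}/c$, and to show first that this comparison is preserved for all $t>0$, then that $x$ must escape to infinity in finite time. First I would record the key structural identity: since $x_c$ solves \eqref{eq:Logistic} and $\alpha=e^{-r}$, one has $x_c'(t)=rx_c(t)$ and $1+\alpha x_c(t)-x_c(t-1)=1$ for all $t$. Writing the equation for $x$ and subtracting, a short computation gives a differential inequality/equation for $u$; because $x(t-1)-x_c(t-1)=x_c(t-1)(u(t-1)-1)$ and the feedback enters with a positive sign through $\alpha$, I expect to obtain something of the form
\begin{equation}
u'(t)=r\,u(t)\bigl(\alpha x_c(t)(u(t)-1)-x_c(t-1)(u(t-1)-1)\bigr),\label{eq:uprime}
\end{equation}
i.e. $u'(t)=r\,u(t)\,x_c(t-1)\bigl(u(t)-u(t-1)\bigr)$ after using $\alpha x_c(t)=e^{-r}ce^{rt}=ce^{r(t-1)}=x_c(t-1)$. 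This is the crucial simplification: the instantaneous and delayed coefficients coincide, so $u$ satisfies the clean relation $u'(t)=r\,x_c(t-1)\,u(t)\,(u(t)-u(t-1))$.

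Next I would establish the invariance $u(t)>1$ for $t>0$ using this relation together with the initial data \eqref{eq:IC1}--\eqref{eq:IC3}, which say exactly that $u(s)\le 1$ on $[-1,0]$, $u(0)=1$, and $u(-1)<1$. At $t=0^+$ we have $u'(0)=r\,x_c(-1)\,u(0)\,(u(0)-u(-1))=r x_c(-1)(1-u(-1))>0$ by \eqref{eq:IC3}, so $u$ strictly increases immediately past $1$. To propagate this, I would argue by contradiction: let $t_0>0$ be the first time $u$ returns to the value $1$ (if it exists); on $(0,t_0)$ we have $u>1$ while $u(t-1)\le 1$ for $t\le 1$, forcing $u'>0$ there, and for $t>1$ one bootstraps since the delayed argument has already entered the region $u>1$ but the key point is to show $u(t)-u(t-1)$ cannot become negative enough to pull $u$ back to $1$. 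The cleanest route is probably to show directly that $u$ is nondecreasing on $[0,\infty)$: if $u'$ first vanishes at some $t_1$ with $u(t_1)=u(t_1-1)$, then since $u$ was increasing before, $u(t_1-1)<u(t_1)$ for $t_1-1$ in the increasing range — contradiction — so in fact $u$ is strictly increasing for all $t>0$, giving $u(t)>u(0)=1$, i.e. $x(t)>x_c(t)$.

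Finally, for the blow-up, I would combine $u>1$ with monotonicity to get a closed differential inequality for $x$ itself that forces finite-time blow-up. Since $x(t)>x_c(t)=ce^{rt}\to\infty$ we at least have unbounded growth; to get finite time, note that on any interval $[n,n+1]$ the delayed term satisfies $x(t-1)\le x(n)$ (once monotonicity of $x$ is known — which follows from $x=x_c u$ with both factors increasing), hence $x'(t)=rx(t)(1+\alpha x(t)-x(t-1))\ge rx(t)(\alpha x(t)-x(n))$, and since $x(t)\ge x(n+\tfrac12)$ grows geometrically in $n$, for large $n$ the bracket is bounded below by $c_n x(t)$ with $c_n\to\infty$, yielding a Riccati-type inequality $x'(t)\ge \text{const}\cdot x(t)^2$ on a unit interval with a huge constant, which blows up before the interval ends. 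The main obstacle I anticipate is the bookkeeping in the invariance argument — ensuring $u(t)-u(t-1)\ge 0$ (or at least that $u$ stays above $1$) for $t>1$, where the delayed term is itself in the "bad" region; using monotonicity of $u$ rather than trying to control signs pointwise is what makes this tractable, so I would prioritize proving $u$ is nondecreasing as early as possible and deriving everything else from it.
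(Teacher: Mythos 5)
The first half of your argument is sound and is essentially the paper's: your ratio $u(t)=x(t)/x_c(t)$ satisfies $u'(t)=rce^{r(t-1)}u(t)\bigl(u(t)-u(t-1)\bigr)$, which is exactly the paper's equation for $z=\ln u$, and your first-crossing argument (using $u(t-1)\le 1$ from (\ref{eq:IC1}) when $t_1\in(0,1)$, and $u(t_1)-u(t_1-1)=\int_{t_1-1}^{t_1}u'>0$ when $t_1\ge 1$) gives monotonicity of $u$ and hence $x(t)>x_c(t)$ for $t>0$, just as in the paper.

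The blow-up half, however, has a genuine gap. You claim that on $[n,n+1]$ the bracket $1+\alpha x(t)-x(t-1)$ is eventually bounded below by $c_nx(t)$ with $c_n\to\infty$, "since $x(n+\tfrac12)$ grows geometrically." First, $c_n\to\infty$ is literally impossible: the bracket is at most $1+\alpha x(t)$, so the coefficient can never exceed $\alpha=e^{-r}$. Second, even the weaker (and sufficient) claim, bracket $\ge\delta x(t)$ for a fixed $\delta>0$, does not follow from what you have established. Writing $1+\alpha x(t)-x(t-1)=1+e^{-r}x(t)\bigl(1-u(t-1)/u(t)\bigr)$, one sees that growth of $x$ at the exponential rate $r$ is exactly neutral (for $x_c$ itself the bracket is identically $1$); the quadratic term is controlled entirely by the increments $u(t)-u(t-1)$, and monotonicity of $u$ together with $x>ce^{rt}$ does not exclude, a priori, that $u(t)-u(t-1)\to 0$ fast enough for the bracket to stay bounded, in which case no Riccati inequality and no finite-time blow-up is forced. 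This is precisely the step the paper's proof supplies and yours lacks: assuming global existence, it derives $z'(t)\ge rce^{r(t-1)}\bigl(z(t)-z(t-1)\bigr)=rce^{r(t-1)}\int_{t-1}^{t}z'(s)\,ds$ (via the mean value theorem and $z>0$), chooses $T$ so large that $rce^{r(t-1)}>1$ for $t\ge T$, and bootstraps by a second first-crossing argument to the uniform bound $z'(t)\ge m:=\min_{[0,T]}z'>0$ for all $t\ge T$; only then does $e^{z(t)}-e^{z(t-1)}\ge(1-e^{-m})e^{z(t)}$ yield the integrable inequality $z'e^{-z}\ge\mathrm{const}\cdot e^{rt}$ whose integration contradicts global existence. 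Without an argument giving such a uniform lower bound on $z(t)-z(t-1)$ (equivalently on $1-u(t-1)/u(t)$), your sketch establishes only $x(t)>ce^{rt}$, not blow-up.
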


\begin{proof}
Looking for a contradiction, we assume that $x(t)$ exists on $\left[0,\infty\right)$.
Then $x(t)>0,\ t\geq-1$. Define 
\[
z(t):=\ln\left(\frac{x(t)}{x_{c}(t)}\right)=\ln\left(\frac{x(t)}{ce^{rt}}\right),\ t\geq-1.
\]
Let 
\[
\psi(s):=\ln\left(\frac{\phi(s)}{ce^{rs}}\right),\ s\in[-1,0].
\]
Then $z(s)=\psi(s)$ for $s\in\left[-1,0\right]$, and we have 
\[
0=\psi(0)\geq\psi(s), \quad 0=\psi(0)>\psi(-1),\ s\in\left[-1,0\right]
\]
from (\ref{eq:IC1}), (\ref{eq:IC2}) and (\ref{eq:IC3}). Now we
obtain the relation
$$\frac{d}{dt}z(t)=\frac{ce^{rt}}{x(t)}\frac{x'(t)ce^{rt}-x(t)rce^{rt}}{c^2e^{2rt}}=
\frac{x'(t)}{x(t)}-r ,$$
which can be rewritten by using \eqref{eq:Logistic} and $\alpha=e^{-r}$ as
$$\frac{d}{dt}z(t)=r (e^{-r} x(t)-x(t-1)),$$
and by $x(t)=e^{z(t)}ce^{rt}$ we obtain a nonautonomous differential equation for $z$: 
\begin{equation}
z^{\prime}(t)=rce^{r\left(t-1\right)}\left(e^{z(t)}-e^{z(t-1)}\right),\ t>0\label{eq:Non_logistic}
\end{equation}
using (\ref{eq:ubd}). First we show that $z^{\prime}(t)>0$ for any
$t\geq0$. Since 
\[
\psi(0)=\ln\left(\frac{\phi(0)}{c}\right)=0>\psi(-1)=\ln\left(\frac{\phi(-1)}{ce^{-r}}\right)
\]
follows from (\ref{eq:IC2}) and (\ref{eq:IC3}), one finds 
\[
z^{\prime}(0)=rce^{-r}\left(e^{\psi(0)}-e^{\psi(-1)}\right)>0.
\]
Assume that there exists $t_{1}>0$ such that $z^{\prime}(t)>0$ for $\ 0\le t<t_{1}$
and $z^{\prime}(t_{1})=0$ hold. If $t_{1}\in(0,1)$ then, since
$t_{1}-1\in\left(-1,0\right)$, 
\[
z^{\prime}(t_{1})=rce^{r\left(t_{1}-1\right)}\left(e^{z(t_{1})}-e^{z(t_{1}-1)}\right),
\]
while 
\[
z(t_{1})-z(t_{1}-1)=z(t_{1})-z(0)+\psi(0)-\psi(t_{1}-1)>0,
\]
thus we obtain a contradiction. If $t_{1}\geq1$, then 
\[
z(t_{1})-z(t_{1}-1)=\int_{t_{1}-1}^{t_{1}}z^{\prime}(s)ds>0,
\]
which leads a contradiction again. Therefore, we obtain $z^{\prime}(t)>0$
for $t\geq0$. 

We can fix a $T>2$ such that 
\[
1<\left(1-\alpha\right)re^{-r}ce^{rT}.
\]
Since $z(t)$ exists on $\left[0,\infty\right)$ and $z^{\prime}(t)>0$
for $t\geq0$, $z^{\prime}(t)>0$ for $0\leq t\leq T$. Thus
\[
m:=\min_{0\leq t\leq T}z'(t)>0.
\]
By the intermediate value theorem, for each $0\leq t\leq T$, there exists
$\xi(t)\in\left[z(t-1),z(t)\right]$ such that $e^{z(t)}-e^{z(t-1)}=e^{\xi(t)}\left(z(t)-z(t-1)\right)$.
Since $\xi(t)\geq z(t-1)>0$, we have 
\begin{align*}
z^{\prime}(t) & =rce^{r\left(t-1\right)}e^{\xi(t)}\left(z(t)-z(t-1)\right)\\
 & \geq rce^{r\left(t-1\right)}\left(z(t)-z(t-1)\right)\\
 & =rce^{r\left(t-1\right)}\int_{t-1}^{t}z^{\prime}(s)ds.
\end{align*}
This yields 
\begin{equation}
z'(t)>\int_{t-1}^{t}z^{\prime}(s)ds,\ t\geq T\label{eq:est_dif}
\end{equation}
and hence $z'(T)>\int_{T-1}^{T}z^{\prime}(s)ds\geq m$. This implies
that $z'(t)>m$ for $t\geq T$. Otherwise there is a $t_{1}>T>1$
such that $z^{\prime}(t)>m$ for $0<t<t_{1}$ and $z^{\prime}(t_{1})=m$.
But from (\ref{eq:est_dif})
\[
z'(t_{1})>\int_{t_{1}-1}^{t_{1}}z^{\prime}(s)ds>m,
\]
which is a contradiction.

Thus for any $t\geq1$ we have 
\begin{align*}
e^{z(t)}-e^{z(t-1)} & =e^{z(t)}\left(1-e^{-\left(z(t)-z(t-1)\right)}\right)\\
 & =e^{z(t)}\left(1-e^{-\int_{t-1}^{t}z^{\prime}(s)ds}\right)\\
 & \geq e^{z(t)}\left(1-e^{-m}\right).
\end{align*}
Therefore, 
\[
z^{\prime}(t)\geq rce^{r\left(t-1\right)}e^{z(t)}\left(1-e^{-m}\right),\ t\geq1,
\]
or equivalently 
\[
z^{\prime}(t)e^{-z(t)}\geq \left(rce^{-r}(1-e^{-m})\right)e^{rt},\ t\geq 1.
\]
Integrating both sides of
the above equation,
\begin{align*}
\int_{1}^{t}z^{\prime}(s)e^{-z(s)}ds & =\left[-e^{-z(s)}\right]_{s=1}^{s=t}=e^{-z(1)}-e^{-z(t)}\\
 & \geq  \left(rce^{-r}(1-e^{-m})\right) \int_{1}^{t}e^{rs}ds\\
 & = \left(ce^{-r}(1-e^{-m})\right) (e^{rt}-e^{r}),\ t>1.
\end{align*}
This yields 
\[
e^{-z(1)}\geq \left(ce^{-r}(1-e^{-m})\right) (e^{rt}-1)+e^{-z(t)}>\left(ce^{-r}(1-e^{-m})\right)(e^{rt}-1),\ t\geq1,
\]
which is a contradiction. Therefore, $z$ does not exist on $\left[0,\infty\right),$
moreover $z^{\prime}(t)\geq0,\ t\geq0$.

Consequently we should have a $T\in\left(0,\infty\right)$ such that
$\lim_{t\to T-}z(t)=+\infty$, that is it is a blow-up solution. Corresponding
to this blow-up solution, we also have 
\[
x(t)=ce^{z(t)+rt}\to\infty,\ t\to T^{-},
\]
that is $x(t)$ is also a blow-up solution. 
\end{proof}

\begin{prop}
The following estimate is valid: \[
\frac{1}{r}\ln\left(1+\frac{e^{r}}{c}\right)\leq T,
\]
where $T$ is the blow-up time for a blow-up solution $x$  in Theorem
\ref{thm:blowup}.

\end{prop}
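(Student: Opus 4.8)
The plan is to bound $z'$ from above by a Riccati-type quantity and integrate, exploiting that the function $z$ constructed in the proof of Theorem \ref{thm:blowup} is already known to blow up at the finite time $T$. Recall that $z(t)=\ln\bigl(x(t)/(ce^{rt})\bigr)$ satisfies $z(0)=0$ (because $\phi(0)=c$) together with
\[
z'(t)=rce^{r(t-1)}\bigl(e^{z(t)}-e^{z(t-1)}\bigr),\qquad t>0,
\]
and $\lim_{t\to T^{-}}z(t)=+\infty$. Since $x$ remains positive, $z(t-1)$ is finite for every $t\geq 0$, hence $e^{z(t-1)}>0$ and therefore
\[
z'(t)\leq rce^{r(t-1)}e^{z(t)},\qquad t\geq 0.
\]

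Next I would rewrite this as $z'(t)e^{-z(t)}\leq rce^{-r}e^{rt}$ and integrate over $[0,t]$ for an arbitrary $t\in(0,T)$. The left-hand side integrates to $e^{-z(0)}-e^{-z(t)}=1-e^{-z(t)}$, and the right-hand side to $ce^{-r}(e^{rt}-1)$, so that
\[
1-e^{-z(t)}\leq ce^{-r}\bigl(e^{rt}-1\bigr),\qquad 0<t<T.
\]
Letting $t\uparrow T$ and using $z(t)\to+\infty$, the term $e^{-z(t)}$ vanishes, whence $1\leq ce^{-r}(e^{rT}-1)$, i.e.\ $e^{rT}\geq 1+e^{r}/c$, which is equivalent to the claimed lower bound $T\geq \tfrac{1}{r}\ln\bigl(1+e^{r}/c\bigr)$.

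This is essentially a scalar comparison argument, so there is no serious obstacle; the only step that needs a word of justification is the passage to the limit $t\uparrow T$, which is legitimate precisely because Theorem \ref{thm:blowup} guarantees $z(t)\to+\infty$ there, so that $e^{-z(t)}\to 0$. Note also that the crude bound $e^{z(t)}-e^{z(t-1)}\le e^{z(t)}$ already suffices: any finer estimate of the delayed term $e^{z(t-1)}$ would only sharpen the constant, not change the form of the bound.
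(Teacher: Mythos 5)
Your proof is correct and in substance the same as the paper's: both arguments discard the positive delayed term (equivalently $x(t-1)>0$, or $e^{z(t-1)}>0$) to reduce to the Riccati-type bound $x'(t)\le rx(t)\bigl(1+e^{-r}x(t)\bigr)$, the paper by invoking a comparison theorem with the explicitly solved ODE $y'=ry(1+e^{-r}y)$, $y(0)=c$, and you by integrating the resulting differential inequality directly in the variable $z(t)=\ln\bigl(x(t)/(ce^{rt})\bigr)$ and letting $t\uparrow T$ with $z(t)\to+\infty$. Both routes give $e^{rT}\ge 1+e^{r}/c$, i.e.\ the stated lower bound on the blow-up time.
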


\begin{proof}
We can find the lower bound for the blow-up time $T$ by the standard comparison principle. 
Consider the following ordinary differential equation 
\[
y^{\prime}(t)=y(t)r\left(1+e^{-r}y(t)\right)
\]
with $y(0)=c=x(0)$. By the comparison theorem, we have $x(t)\leq y(t),\ t\geq0$.
Integrating the equation, we get 
\[
y(t)=\frac{ce^{rt}}{1+\left(1-e^{rt}\right)e^{-r}c}
\]
for sufficiently small $t$. From this expression, we find the finite
blow-up time for $y$ and then we obtain the required estimation.
\end{proof}

Similar to the proof of Theorem \ref{thm:blowup}, we obtain the following
theorem. 
\begin{thm}
\label{thm:global}Let the condition (\ref{eq:ubd}) hold. Consider
a solution with the initial function satisfying 
\begin{equation}
ce^{rs}\leq\phi(s),\ s\in\left[-1,0\right]\label{eq:IC1-1}
\end{equation}
with 
\begin{align}
\phi(0) & =c,\label{eq:IC2-1}\\
\phi(-1) & >ce^{-r}\label{eq:IC3-1}
\end{align}
for $c>0$. Then one has $x(t)<x_{c}(t)=ce^{rt}$ for $t>0$, and consequently the solution exists on $[-1,\infty)$.
\end{thm}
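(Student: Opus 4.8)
The plan is to run the argument of Theorem~\ref{thm:blowup} with all the inequalities reversed. Let $[-1,t_{\max})$ be the maximal interval of existence of the solution $x$. Since, as observed earlier, positive solutions of~\eqref{eq:Logistic} remain positive, we have $x(t)>0$ on this interval, so we may set
\[
z(t):=\ln\!\left(\frac{x(t)}{ce^{rt}}\right),\qquad \psi(s):=\ln\!\left(\frac{\phi(s)}{ce^{rs}}\right),
\]
and the computation in the proof of Theorem~\ref{thm:blowup} shows, using~\eqref{eq:ubd}, that $z$ satisfies~\eqref{eq:Non_logistic} on $(0,t_{\max})$. From \eqref{eq:IC1-1}, \eqref{eq:IC2-1} and \eqref{eq:IC3-1} we now have $\psi(s)\ge 0$ on $[-1,0]$, $\psi(0)=0$ and $\psi(-1)>0$, i.e. $z(s)=\psi(s)\ge 0$ on $[-1,0]$ with $z(0)=0<z(-1)$.

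First I would prove that $z'(t)<0$ for all $t\in[0,t_{\max})$. At $t=0$ this is immediate from~\eqref{eq:Non_logistic}, since $z'(0)=rce^{-r}\bigl(e^{\psi(0)}-e^{\psi(-1)}\bigr)=rce^{-r}\bigl(1-e^{\psi(-1)}\bigr)<0$. If the claim failed, there would be a first $t_1\in(0,t_{\max})$ with $z'(t_1)=0$ and $z'<0$ on $[0,t_1)$. If $t_1\in(0,1)$, then $t_1-1\in(-1,0)$, so $z(t_1-1)=\psi(t_1-1)\ge 0$ and
\[
z(t_1)-z(t_1-1)=\bigl(z(t_1)-z(0)\bigr)+\bigl(\psi(0)-\psi(t_1-1)\bigr)=\int_0^{t_1}z'(s)\,ds-\psi(t_1-1)<0;
\]
if $t_1\ge 1$, then $z(t_1)-z(t_1-1)=\int_{t_1-1}^{t_1}z'(s)\,ds<0$, since $z'<0$ on $[t_1-1,t_1)\subset[0,t_1)$. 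In both cases~\eqref{eq:Non_logistic} gives $z'(t_1)<0$, contradicting $z'(t_1)=0$. Hence $z$ is strictly decreasing on $[0,t_{\max})$, so $z(t)<z(0)=0$ for $t\in(0,t_{\max})$, which is exactly $x(t)<ce^{rt}$.

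It then remains to conclude global existence. On any $[0,b]$ with $b<t_{\max}$ we have $0<x(t)<ce^{rb}$, so $x$ is bounded on every bounded subinterval of $[-1,t_{\max})$; by the standard continuation theorem for delay differential equations with continuous right-hand side, a solution that stays bounded can be extended, whence $t_{\max}=\infty$.

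I do not expect a genuine obstacle here: the argument is the mirror image of Theorem~\ref{thm:blowup}, and in contrast to that proof no quantitative lower bound on $z'$ bounded away from zero is needed, since we only want the \emph{a priori} bound $x(t)<ce^{rt}$ rather than a blow-up mechanism. The only point requiring a little care is to phrase everything on the maximal interval of existence instead of assuming global existence in advance, so that the bound $x(t)<ce^{rt}$ can legitimately be fed into the continuation theorem.
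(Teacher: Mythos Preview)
Your proposal is correct and is exactly what the paper intends: it does not spell out a proof of Theorem~\ref{thm:global} but merely states that it is obtained ``similar to the proof of Theorem~\ref{thm:blowup}'', i.e.\ by reversing the inequalities. You have also correctly observed that only the first part of that argument (the monotonicity of $z$, giving $x(t)<ce^{rt}$) is needed here, and that the quantitative lower bound on $z'$ used to force blow-up in Theorem~\ref{thm:blowup} is unnecessary; your phrasing on the maximal interval of existence, followed by the continuation theorem, is the clean way to conclude.
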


For the initial functions considered in Theorems \ref{thm:blowup}
and \ref{thm:global}, $\frac{x(t)}{x_{c}(t)}$ is a monotone function
for $t>0$, thus the order of the solution with respect to the exponential
solution, $x_{c}(t)=ce^{rt}$, is preserved. We do not analyze the
qualitative behavior of the solution with the initial condition that
oscillates about the exponential solution. Numerical simulations suggest
that, for many solutions, $\frac{x(t)}{x_{c}(t)}$ eventually becomes
a monotone function. 

\section{Discussion}

In this paper we study the logistic equation (\ref{eq:Logistic}).
In the stability analysis of delayed logistic equations, negative
instantaneous feedback is usually assumed, see \cite{Faria:2003,Gyori:2000,Ruan:2006,Teng:2002}
and references therein. Only a few stability results are available in the
literature for the case of positive instantaneous feedback e.g., \cite{Lenhart:1986,Li:2010}.
However, the blow-up solutions, which are present due to the positive instantaneous
feedback, have not been analyzed in detail, since the publication of
the paper \cite{Gyori:2000}. This manuscript has been inspired by the work
done in \cite{Gyori:2000}, especially, paying attention to the examples and open questions
given in Section 5 of the paper \cite{Gyori:2000}. Our primary goal
was to clarify and understand a relation between the stability condition
of the positive equilibrium and the existence condition for the exponential
solution. For the logistic equation (\ref{eq:Logistic}), we show
that the existence of the exponential solution implies instability
of the positive equilibrium in Proposition \ref{prop:exp}, see also
Fig. \ref{fig:Stabr}. Since stability analysis becomes extremely
hard for the differential equation with multiple delays, the comparison
of the existence condition of the exponential solution to the stability
condition is not straightforward in general, thus it remains an
open problem whether the positive equilibrium of \eqref{eq:genlog} is always unstable whenever exists and \eqref{cond:genlog} holds. 

Finding a global stability condition for  \eqref{eq:Logistic} in the case of $-1<\alpha<0$ is still an open problem.
For $\alpha=0$ the global stability problem is known as the famous
Wright conjecture \cite{krisztin}. On the other hand, for $0<\alpha<1$, due to the
existence of the blow-up solution, it is shown that the stable equilibrium
can not attract every solution, thus there is no hope to obtain global
stability condition for $0<\alpha<1$. Numerical simulations also
suggest that there are many bounded and oscillatory solutions 
. 

In Theorems \ref{thm:blowup} and \ref{thm:global} we fix the parameters
as in (\ref{eq:ubd}) in Proposition \ref{prop:exp} so that the exponential
solution exists for the logistic equation (\ref{eq:Logistic}). We
consider some solutions that preserve the order with respect to the
exponential solution, and show that some blow up, while others exist for all positive time. The qualitative behaviour of the solution with
the initial condition that oscillates about the exponential solution
is not studied. For such an initial function, careful estimation of
the solution seems to be necessary to understand the long term solution behaviour.
Numerical simulations suggest that, for many solutions, $\frac{x(t)}{x_{c}(t)}$
eventually becomes a monotone function. The detailed understanding of the evolution of such solutions is also left for future work. 

Finally, one might be interested in the equation 
\[
\frac{d}{dt}y(t)=y(t)r\left(1+\alpha y(t)+y(t-1)\right),
\]
which has an opposite sign for the delayed feedback term. For this
equation the qualitative dynamics is studied in the literature. The
positive equilibrium
\[
y^{*}=-\frac{1}{1+\alpha}
\]
exists if and only if $\alpha<-1$. According to Theorem 5.6 in Chapter
2 in \cite{Kuang:1993}, the positive equilibrium is globally asymptotically
stable. When $\alpha\geq-1$ the solutions are unbounded, see again
Theorem 5.1 in \cite{Gyori:2000}. 

\subsection*{Acknowledgement}
The first author\textquoteright s research has been
supported by the Hungarian National Research Fund Grant OTKA K120186 and the  
Sz\'echenyi 2020 project EFOP-3.6.1-16-2016-00015.
The second author was supported by JSPS Grant-in-Aid for Young Scientists
(B) 16K20976.  The third author was supported by Hungarian National Research Fund Grant NKFI FK 124016 and MSCA-IF 748193. The meeting of the authors have been supported by JSPS and NKFI Hungary-Japan bilateral cooperation project.


\begin{thebibliography}{10}
	
\bibitem{Appleby:2011}J.A.D. Appleby, I. Gy\H{o}ri, D.W. Reynolds,
History-dependent decay rates for a logistic equation with infinite
delay. Proc. Roy. Soc. Edinburgh Sect. A 141 (2011), no. 1, 23--44.

\bibitem{krisztin} {B. B{\'a}nhelyi, T. Csendes, T. Krisztin, and A. Neumaier, },Global attractivity of the zero solution for Wright's equation, {SIAM Journal on Applied Dynamical Systems},
{13}(2014), {537--563}.

\bibitem{Diekmann:1991}O. Diekmann, S.A. van Gils, S.M.V. Lunel,
H.O. Walther, Delay Equations Functional, Complex and Nonlinear Analysis,
Springer Verlag (1991).

\bibitem{Faria:2003}T. Faria, E. Liz, Boundedness and asymptotic
stability for delayed equations of logistic type. Proc. Roy. Soc.
Edinburgh Sect. A 133 (2003), no. 5, 1057\textendash 1073.

\bibitem{Liz:2008}E. Liz, G. R\"ost, Dichotomy results for delay
differential equations with negative Schwarzian, Nonlinear Analysis:
Real World Applications 11.3 (2010): 1422--1430.

\bibitem{Gopalsamy:1992}K. Gopalsamy, Stability and oscillation in
delay differential equations of population dynamics. Kluwer Academic
Publishers (1992).

\bibitem{Gyori:2000}I. Gy\H{o}ri, A new approach to the global asymptotic
stability problem in a delay Lotka-Volterra differential equation.
Mathematical and Computer Modelling 31 6 (2000) pp. 9--28.

\bibitem{GyoriHartung:2006}I. Gy\H{o}ri, F. Hartung, Fundamental
solution and asymptotic stability of linear delay differential equations,
Dynamics of Continuous, Discrete and Impulsive Systems, 13:2 (2006)
261--288.

\bibitem{GyoriLadas1991}I. Gy\H{o}ri, G. Ladas, Oscillation theory
of delay differential equations with applications, Clarendon Press,
Oxford (1991).

\bibitem{He:1998}X. He, Global stability in nonautonomous Lotka-Volterra
systems of ``pure-delay type'', Differential and Integral Equations
11.2 (1998) 293--310.

\bibitem{Kuang:1993}Y. Kuang, Delay differential equations with applications
in population dynamics, Academic Press, San Diego (1993).

\bibitem{Lenhart:1986}S.M. Lenhart, C.C. Travis, Global stability
of a biological model with time delay, Proc. Amer. Math. Sot. 96 (1986)
75--78.

\bibitem{Li:2010}H. Li, R. Yuan, An affirmative answer to the extended
Gopalsamy and Liu's conjecture on the global asymptotic stability
in a population model. Nonlinear Anal. Real World Appl. 11 (2010),
no. 5, 3295\textendash 3308.

\bibitem{Ruan:2006}S. Ruan, Delay differential equations in single
species dynamics. In: Delay Differential Equations and Applications, Springer
(2006), 477--517.

\bibitem{Stepan:1989}G. St\'ep\'an, Retarded Dynamical Systems:
Stability and Characteristic Function, Wiley, New York, 1989.

\bibitem{Teng:2002}Z. Teng, Permanence and stability in non-autonomous
logistic systems with infinite delay. Dyn. Syst. 17 (2002), no. 3,
187\textendash 202. 
\end{thebibliography}
\end{document}